\documentclass[11pt]{amsart}
\usepackage{mathrsfs}
\usepackage{amsfonts}
\usepackage{latexsym,amsmath,amssymb}

 \textwidth 5.5 true in
\oddsidemargin 0.35 true in

\evensidemargin 0.35 true in

\setcounter{section}{0}

\pagestyle{myheadings} \footskip=50pt

 \renewcommand{\epsilon}{\varepsilon}

\newtheorem{theorem}{Theorem}[section]

 \newtheorem{lemma}[theorem]{Lemma}
 
 \newtheorem{Corollary}[theorem]{Corollary}
 \newtheorem{proposition}[theorem]{proposition}
 \newtheorem{Proposition}[theorem]{Proposition}
\newtheorem{deff}[theorem]{Definition}
 
 \newcommand{\bth}{\begin{theorem}}
 \newcommand{\ble}{\begin{lemma}}
 \newcommand{\bcor}{\begin{corr}}
 \newcommand{\bdeff}{\begin{deff}}
 \newcommand{\bprop}{\begin{proposition}}
 \newcommand{\ele}{\end{lemma}}
 \newcommand{\ecor}{\end{corr}}
 \newcommand{\edeff}{\end{deff}}
 
 \newcommand{\eprop}{\end{proposition}}

 \renewcommand{\Pi}{\varPi}

 \renewcommand{\epsilon}{\varepsilon}

\numberwithin{equation}{section}

\pagestyle{plain}

\thanks{The second author is supported  by National Science Foundation of China(No.11771103)
and Guangxi Natural Science Foundation (No.2017GXNSFFA198013).
The third author is supported by Guangdong Natural Science Foundation(No.2016A030307008).}

\title
[Nonlinear second boundary conditions ]{On the second boundary value problem for  a class of fully nonlinear flows II }
\author{Juanjuan Chen}
\address{Faculty of Information Technology, Macau University of Science and Technology, Macau, E-mail: janehappy@gxnu.edu.cn}
\author{Rongli Huang}
\address{School of Mathematics and Statistics, Guangxi Normal University,
Guilin, Guangxi 541004, People's Republic of China,
 E-mail: ronglihuangmath@gxnu.edu.cn}
\author{Yunhua Ye}
\address{School of Mathematics, Jiaying University,
Meizhou, Guangdong 514015, People's Republic of China, E-mail: mathyhye@163.com}
\date{}

\begin{document}
\maketitle
\begin{abstract}
This article is a continuation of earlier work [R.L. Huang and Y.H. Ye,  On the second boundary value problem for  a class of fully nonlinear flows I, to appear in International Mathematics Research Notices], where the long time existence and convergence  were given on some general parabolic type special Lagrangian equations.
The long time existence and convergence of the flow had been obtained in all cases.
In particular, we can prescribe the second boundary value problems for a family of special Lagrangian graphs.
\end{abstract}

\let\thefootnote\relax\footnote{
2010 \textit{Mathematics Subject Classification}. Primary 53C44; Secondary 53A10.

\textit{Keywords and phrases}. parabolic type special Lagrangian equation; special Lagrangian  diffeomorphism; special Lagrangian  graph.}

\section{Introduction}

In this article, we discuss the existence of a family of special Lagrangian graphs by solving the corresponding special Lagrangian equations.
Let $\Omega$, $\tilde{\Omega}$ be two uniformly convex bounded
domains with smooth boundary in  $\mathbb{R}^{n}$  and  $a=\cot\tau$, $b=\sqrt{|\cot^{2}\tau-1|}$ for $\tau \in [0,\frac{\pi}{2}]$.
Here we consider the minimal Lagrangian diffeomorphism problem \cite{RS} which  is equivalent to
the following fully nonlinear elliptic equations with second boundary condition (cf. \cite{SM}, \cite{HRO} and \cite{MW}):
\begin{equation}\label{e1.1}
\left\{ \begin{aligned}F_{\tau}(D^{2}u)&=c,
&  x\in \Omega, \\
Du(\Omega)&=\tilde{\Omega},
\end{aligned} \right.
\end{equation}
where
$$F_{\tau}(A)=\left\{ \begin{aligned}
&\sum_{i}\ln\lambda_{i} , \quad
\qquad\quad\qquad\qquad\quad\quad\qquad\quad  \tau=0, \\
& \sum_{i}\ln(\frac{\lambda_{i}+a-b}{\lambda_{i}+a+b}) \qquad\qquad\quad\quad\qquad\quad 0<\tau<\frac{\pi}{4},\\
& -\sum_{i}\frac{1}{1+\lambda_{i}}, \qquad \qquad\qquad \qquad\qquad \qquad\tau=\frac{\pi}{4},\\
& \sum_{i}\arctan(\frac{\lambda_{i}+a-b}{\lambda_{i}+a+b}), \qquad \qquad\qquad \quad\frac{\pi}{4}<\tau<\frac{\pi}{2},\\
& \sum_{i}\arctan\lambda_{i}, \,\,\quad \qquad\qquad \qquad\qquad \qquad\tau=\frac{\pi}{2},
\end{aligned} \right.$$
$\lambda_i ~(1\leq i\leq n)$ are the eigenvalues of the Hessian $D^2 u$ and $Du$ are diffeomorphism from $\Omega$ to $\tilde{\Omega}$.

Our motivation of studying equation \eqref{e1.1} is that they have geometric meanings
which were studied by Warren. To illustrate this, let us recall the definition of Lagrangian and
special Lagrangian graph as in \cite{HRY} or \cite{MW}.
The graph $\Sigma=\{(x,f(x)): x \in \Omega\}$ is Lagrangian if and only if there exists a function $u:\Omega\rightarrow \mathbb{R}$
such that $f(x)=Du(x)$.
Let $\delta_{0}$ be the standard Euclidean metric on $\mathbb{C}^n\cong \mathbb{R}^n\times \mathbb{R}^n$
and $g_{0}$ be the metric defined by
$$2dxdy= \sum_{i}(dx_{i}\bigotimes dy_{i}+dy_{i}\bigotimes dx_{i}).$$
By taking linear combinations of the metrics $\delta_{0}$ and $g_{0}$, Warren \cite{MW} constructed a family of metrics
on  $\mathbb{R}^{n}\times\mathbb{R}^{n}$ for $0\leq \tau \leq \frac{\pi}{2}$:
\begin{equation*}
g_{\tau}=\cos \tau g_{0}+\sin \tau \delta_{0}.
\end{equation*}
With this family of metrics, Warren \cite{MW} derived that the solutions of special Lagrangian equations \eqref{e1.1} correspond to
a family of extremal Lagrangian surfaces.
Warren also studied the extremal volume property of these special Lagrangian graphs in \cite{MW}. For $t<\frac{\pi}{4}$, $M_t=(\mathbb{R}^n\times \mathbb{R}^n,g_t)$ is a pseudo-Euclidean space of index $n$. For $t>\frac{\pi}{4}$, $M_t$ is a Euclidean space. For $t=\frac{\pi}{4}$, $M_t$ carries a degenerate metric of rank $k$.

We have the following definition of special Lagrangian graph as in \cite{MW}.
\begin{deff}\label{d1.1}
 We say that $\Sigma=\{(x,f(x))|x\in \Omega\}$ is a special Lagrangian graph in $(\mathbb{R}^{n}\times\mathbb{R}^{n}, g_{\tau})$ if $$f=Du$$
  and $u$ satisfies
 $$F_{\tau}(D^{2}u (x))=c,\quad x\in \Omega.$$
\end{deff}

Special Lagrangian graphs have attracted considerable interest in recent years and we recall some work concerning equation \eqref{e1.1}
with second boundary conditions. For the case $\tau=0$, in 1991, Delano\"{e} \cite{P} studied the first work on the problem where the dimension is 2 and he obtained a  unique smooth solution.
Later Caffarelli \cite{L} and Urbas \cite{JU} gave the generalization of Delano\"{e}'s theorem to higher dimensions. Using the parabolic methods, Schn$\ddot{\text{u}}$rer and Smoczyk \cite{OK} also
obtained the existence of solutions to (\ref{e1.1}) for $\tau=0$.
As far as $\tau=\frac{\pi}{2}$ is concerned, Brendle and Warren \cite{SM} proved the existence and uniqueness of the solution
by the elliptic methods and the second author \cite{HR} obtained the existence of solution by the parabolic methods. For the case $\tau=\frac{\pi}{4}$,
the existence result of the above problem \eqref{e1.1} was established by the second author and his coauthors by using both
elliptic method \cite{HRO} and parabolic methods \cite{HRY}.

This article is devoted to studying the equation (\ref{e1.1}) for all
$$\tau\in(0,\frac{\pi}{4})\cup(\frac{\pi}{4},\frac{\pi}{2}).$$
As in \cite{HR} and \cite{HRY}, we consider the corresponding parabolic type special Lagrangian equations and use parabolic methods to solve problem \eqref{e1.1}.
We settle the longtime existence and convergence of smooth solutions
for the following second boundary value problem to parabolic type special Lagrangian equations
\begin{equation}\label{e1.2}
\left\{ \begin{aligned}\frac{\partial u}{\partial t}&=F_{\tau}(D^{2}u),
& t>0,\quad x\in \Omega, \\
Du(\Omega)&=\tilde{\Omega}, &t>0,\qquad\qquad\\
 u&=u_{0}, & t=0,\quad x\in \Omega.
\end{aligned} \right.
\end{equation}

We will prove the solutions of the above special Lagrangian equations will converge to those of problem \eqref{e1.1}. 
Our main results of the present article are summarized as follows.
\begin{theorem}\label{t1.1}
Assume that $\Omega$, $\tilde{\Omega}$ are bounded, uniformly convex domains with smooth boundary in $\mathbb{R}^{n}$, $0<\alpha_{0}<1$,
$\tau\in(0,\frac{\pi}{4})\cup(\frac{\pi}{4},\frac{\pi}{2})$.
  Then for any given initial function $u_{0}\in C^{2+\alpha_{0}}(\bar{\Omega})$
  which is   uniformly convex and satisfies $Du_{0}(\Omega)=\tilde{\Omega}$,  the  strictly convex solution of (\ref{e1.2}) exists
  for all $t\geq 0$ and $u(\cdot,t)$ converges to a function $u^{\infty}(x,t)=u^\infty(x)+C_{\infty}\cdot t$ in $C^{1+\zeta}(\bar{\Omega})\cap C^{4+\alpha}(\bar{D})$ as $t\rightarrow\infty$
  for any $D\subset\subset\Omega$, $\zeta<1$,$0<\alpha<\alpha_{0}$, that is,
 $$\lim_{t\rightarrow+\infty}\|u(\cdot,t)-u^{\infty}(\cdot,t)\|_{C^{1+\zeta}(\bar{\Omega})}=0,\qquad
  \lim_{t\rightarrow+\infty}\|u(\cdot,t)-u^{\infty}(\cdot,t)\|_{C^{4+\alpha}(\bar{D})}=0.$$
And $u^{\infty}(x)\in C^{\infty}(\bar{\Omega})$ is a solution of
\begin{equation}\label{e1.3}
\left\{ \begin{aligned}F_{\tau}(D^{2}u)&=C_{\infty},
&  x\in \Omega, \\
Du(\Omega)&=\tilde{\Omega}.
\end{aligned} \right.
\end{equation}
The constant $C_{\infty}$ depends only on $\Omega$, $\tilde{\Omega}$ and $F$. The solution to (\ref{e1.3}) is unique up to additions of constants.
\end{theorem}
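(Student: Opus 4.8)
The plan is to realize the geometric constraint $Du(\Omega)=\tilde{\Omega}$ as a nonlinear oblique boundary condition and then run the standard continuity-method-plus-a-priori-estimates scheme for the flow \eqref{e1.2}, following the parabolic framework of \cite{HR} and \cite{HRY} but adapting the pointwise structure of $F_{\tau}$ to the two ranges $\tau\in(0,\frac{\pi}{4})$ and $\tau\in(\frac{\pi}{4},\frac{\pi}{2})$. Concretely, let $h$ be a defining function for the uniformly convex target, so $\tilde{\Omega}=\{h<0\}$ with $Dh\neq 0$ on $\partial\tilde{\Omega}$; the condition $Du(\cdot,t)(\Omega)=\tilde{\Omega}$ is then equivalent to $h(Du(x,t))=0$ for $x\in\partial\Omega$. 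Since $h$ and $u(\cdot,t)$ are uniformly convex, the field $\beta(x)=Dh(Du(x))$ (the outer normal to $\tilde{\Omega}$ at $Du(x)$) satisfies $\langle\beta,\nu\rangle>0$ against the outer normal $\nu$ of $\partial\Omega$, so \eqref{e1.2} is a genuinely oblique nonlinear parabolic problem. Short-time existence of a smooth, uniformly convex solution then follows from the linearized oblique theory together with the implicit function theorem, as in \cite{OK} and \cite{HR}.

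The heart of the proof is a family of a priori estimates uniform in $t$. Because $Du(\cdot,t)$ takes values in the fixed bounded set $\tilde{\Omega}$, the gradient bound $\|Du(\cdot,t)\|_{C^{0}}\le C$ is automatic, and a $C^{0}$ bound for $u$ follows after normalizing by an additive constant and tracking $\dot u=\partial_t u$. The decisive step is the uniform second-order estimate $\|D^{2}u(\cdot,t)\|_{C^{0}(\bar\Omega)}\le C$, together with a uniform lower bound on the eigenvalues $\lambda_{i}$ keeping the solution strictly convex and keeping the arguments of the logarithm and arctangent in $F_{\tau}$ inside the admissible range. Interior second-derivative bounds come from differentiating the equation twice and using the concavity (respectively convexity) of $F_{\tau}$ on the cone of admissible Hessians, applied through the maximum principle to an auxiliary quantity of the form $\log\lambda_{\max}$ plus lower-order terms. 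The boundary $C^{2}$ estimate is the main obstacle: one must control the double-normal, normal-tangential and double-tangential second derivatives on $\partial\Omega$ via barriers built from the uniform convexity of both $\Omega$ and $\tilde{\Omega}$ and from the strict obliqueness of $\beta$, and here the two regimes require separate bookkeeping, since the sign of the relevant structure constants (and thus whether $F_{\tau}$ is concave or convex) changes across $\tau=\frac{\pi}{4}$.

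With the uniform $C^{2}$ estimate in hand the equation is uniformly parabolic with concave (or convex) principal part, so Krylov–Safonov and Evans–Krylov theory give a uniform interior $C^{2+\alpha}$ bound, which Schauder estimates upgrade to uniform $C^{4+\alpha}(\bar D)$ on every $D\subset\subset\Omega$ and to $C^{1+\zeta}(\bar\Omega)$ up to the boundary. Long-time existence then follows by continuation, the uniform-in-time estimates ruling out finite-time degeneration so that the maximal existence interval is $[0,\infty)$.

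For the convergence, set $v=\partial_t u$. Differentiating \eqref{e1.2} in $t$ shows that $v$ solves the linear uniformly parabolic equation $\partial_t v=F^{ij}\partial_{ij}v$ with $F^{ij}=\partial F_{\tau}/\partial(D^{2}u)_{ij}>0$, while differentiating $h(Du)=0$ in $t$ yields the homogeneous oblique condition $\beta\cdot Dv=0$ on $\partial\Omega$. The maximum principle for oblique parabolic problems then makes $\max_{\bar\Omega}v$ nonincreasing and $\min_{\bar\Omega}v$ nondecreasing, so $\operatorname{osc}_{\bar\Omega}v(\cdot,t)$ is nonincreasing; a Harnack / strong-maximum-principle argument improves this to exponential decay, whence $v(\cdot,t)\to C_{\infty}$ uniformly for some constant $C_{\infty}$. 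Consequently $u(x,t)-C_{\infty}t$ converges, in the topologies supplied by the uniform estimates, to a limit $u^{\infty}(x)$ that is smooth on $\bar\Omega$ by the interior and oblique boundary regularity and that satisfies $F_{\tau}(D^{2}u^{\infty})=C_{\infty}$ with $Du^{\infty}(\Omega)=\tilde{\Omega}$, that is \eqref{e1.3}. Finally $C_{\infty}$ is pinned down by the solvability (volume-matching) compatibility condition coming from $Du^{\infty}$ being a diffeomorphism onto $\tilde{\Omega}$, so it depends only on $\Omega$, $\tilde{\Omega}$ and $F$; uniqueness up to additive constants follows by applying the maximum principle and the Hopf lemma to the difference $w=u_{1}-u_{2}$ of two solutions, which satisfies a linear elliptic equation with the same homogeneous oblique condition $\beta\cdot Dw=0$ and is therefore constant.
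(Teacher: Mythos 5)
The paper's own proof of Theorem \ref{t1.1} is not a from-scratch parabolic argument at all: it is a two-line reduction. Proposition \ref{p1.1} (Theorem 1.1 of the prequel \cite{HRY}) already packages the short-time existence, the a priori estimates, the long-time existence and the convergence for \emph{any} operator $F$ satisfying the structural hypotheses (\ref{e1.4aa})--(\ref{e1.15b}), and the sole new content of this paper is Lemma \ref{l3.2}, the elementary verification that $F_{\tau}$ satisfies those hypotheses for $\tau\in(0,\frac{\pi}{4})\cup(\frac{\pi}{4},\frac{\pi}{2})$. Your proposal instead re-sketches the general machinery (oblique reformulation via $h(Du)=0$, boundary $C^{2}$ barriers, Evans--Krylov, decay of $\operatorname{osc}\,\partial_{t}u$). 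That is a legitimate route in principle, but as written it has a genuine gap: every step that actually requires work --- strict obliqueness, the double-normal and tangential boundary second-derivative estimates, preservation of strict convexity, the Harnack-type decay --- is asserted rather than proved, and each of these is precisely where quantitative information about $F_{\tau}$ must enter. Beyond $\partial F_{\tau}/\partial\lambda_{i}>0$, you never verify a single property of $F_{\tau}$, so the argument never engages with what is specific to this theorem.

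Concretely, what is missing is exactly the content of Lemma \ref{l3.2}: the two-sided bounds (\ref{e1.16}) and (\ref{e1.17}) on $\sum_{i}\partial F_{\tau}/\partial\lambda_{i}$ and $\sum_{i}\lambda_{i}^{2}\,\partial F_{\tau}/\partial\lambda_{i}$ over the truncated cone $\Gamma^{+}_{]\mu_{1},\mu_{2}[}$ (these give uniform parabolicity); the concavity of \emph{both} $F_{\tau}(A)$ and the dual operator $F_{\tau}^{*}(A)=-F_{\tau}(A^{-1})$, i.e.\ condition (\ref{e1.15}), which is needed for the Legendre-transformed problem and not merely for Evans--Krylov; and the pinching conditions (\ref{e1.15a})--(\ref{e1.15b}) with $f_{1}=f_{2}=n\ln\bigl(\frac{t+a-b}{t+a+b}\bigr)$, resp.\ $n\arctan\bigl(\frac{t+a-b}{t+a+b}\bigr)$, which via Lemma \ref{l3.1} convert the uniform bounds on $\partial_{t}u=F_{\tau}(D^{2}u)$ into uniform upper and lower bounds on the eigenvalues of $D^{2}u$, i.e.\ into the strict convexity you need to preserve. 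Moreover, your claim that the operator switches between concave and convex across $\tau=\frac{\pi}{4}$ is incorrect: on the positive cone $F_{\tau}$ is concave in both regimes, since the diagonal Hessian entries are $\frac{1}{(\lambda_{i}+a+b)^{2}}-\frac{1}{(\lambda_{i}+a-b)^{2}}\leq 0$ for $\tau<\frac{\pi}{4}$ and $-\frac{8(\lambda_{i}+a)b}{((\lambda_{i}+a-b)^{2}+(\lambda_{i}+a+b)^{2})^{2}}\leq 0$ for $\tau>\frac{\pi}{4}$, and it is this uniform concavity together with the concavity of $F_{\tau}^{*}$ that the general theorem requires. The cleanest repair is the paper's own: prove these pointwise facts about $F_{\tau}$ and then invoke Proposition \ref{p1.1} as a black box.
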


Combining Definition \ref{d1.1}, Theorem \ref{t1.1} with the  results for $\tau=0, \frac{\pi}{4}, \frac{\pi}{2}$,   we can extend Brendle-Warren's  theorem \cite{SM} to the following:
\begin{Corollary}\label{c1.1}
Let $\Omega$, $\tilde{\Omega}$ be bounded, uniformly convex domains with smooth boundary in $\mathbb{R}^{n}$ and $0\leq\tau\leq\frac{\pi}{2}$. Then
there exists a diffeomorphism f: $\Omega\rightarrow\tilde{\Omega}$ such that
 $$\Sigma=\{(x,f(x))|x\in \Omega\}$$
is a special Lagrangian graph in $(\mathbb{R}^{n}\times\mathbb{R}^{n}, g_{\tau})$.
\end{Corollary}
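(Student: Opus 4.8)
The plan is to read the corollary off from Theorem \ref{t1.1}, the known endpoint results, and Definition \ref{d1.1}, since the genuine analytic content already resides in Theorem \ref{t1.1}; the only work left is to translate solvability of the elliptic problem \eqref{e1.3} into the existence of the desired diffeomorphism. I would argue by cases in $\tau$, treating $\tau\in(0,\frac{\pi}{4})\cup(\frac{\pi}{4},\frac{\pi}{2})$ via the flow and $\tau\in\{0,\frac{\pi}{4},\frac{\pi}{2}\}$ via the existing literature, and then run a single unified argument identifying the graph.

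For $\tau\in(0,\frac{\pi}{4})\cup(\frac{\pi}{4},\frac{\pi}{2})$ I would invoke Theorem \ref{t1.1}. To apply it one first needs an admissible initial datum $u_{0}\in C^{2+\alpha_{0}}(\bar{\Omega})$ that is uniformly convex and satisfies $Du_{0}(\Omega)=\tilde{\Omega}$; the existence of such a $u_{0}$ follows from the solvability of the optimal transport / Monge--Amp\`ere problem between the two uniformly convex domains (equivalently, from the $\tau=0$ theory of \cite{L} and \cite{JU}), whose uniformly convex potential supplies the required initial function. Theorem \ref{t1.1} then produces a limit $u^{\infty}\in C^{\infty}(\bar{\Omega})$ solving \eqref{e1.3}, that is, $F_{\tau}(D^{2}u^{\infty})=C_{\infty}$ in $\Omega$ together with the second boundary condition $Du^{\infty}(\Omega)=\tilde{\Omega}$.

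For the three excluded values I would quote existing results rather than reprove anything: for $\tau=0$ the work of Delano\"{e} \cite{P}, Caffarelli \cite{L}, Urbas \cite{JU} and Schn\"{u}rer--Smoczyk \cite{OK}; for $\tau=\frac{\pi}{4}$ the results of \cite{HRO} and \cite{HRY}; and for $\tau=\frac{\pi}{2}$ the Brendle--Warren theorem \cite{SM} (or \cite{HR}). In each case one again obtains a uniformly convex solution $u^{\infty}$ of \eqref{e1.3}, so the remaining argument becomes identical across all $\tau\in[0,\frac{\pi}{2}]$.

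Finally I would set $f=Du^{\infty}$ and verify that $f$ is a diffeomorphism of $\Omega$ onto $\tilde{\Omega}$. Since $u^{\infty}$ is uniformly convex, $D^{2}u^{\infty}>0$ everywhere, so $f$ is a smooth local diffeomorphism; strict convexity forces $f$ to be injective on the convex domain $\Omega$, while the second boundary condition $Du^{\infty}(\Omega)=\tilde{\Omega}$ identifies the image, making $f\colon\Omega\to\tilde{\Omega}$ a bijective local diffeomorphism, hence a diffeomorphism. Using the boundary regularity furnished by Theorem \ref{t1.1} and the uniform convexity of both domains, $f$ extends continuously to $\bar{\Omega}\to\overline{\tilde{\Omega}}$. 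With $f=Du^{\infty}$ and $F_{\tau}(D^{2}u^{\infty})=C_{\infty}$, Definition \ref{d1.1} says precisely that $\Sigma=\{(x,f(x))|x\in\Omega\}$ is a special Lagrangian graph in $(\mathbb{R}^{n}\times\mathbb{R}^{n},g_{\tau})$. The main obstacle is not in any single case but in this last step: checking that the gradient map is a genuine global, up-to-the-boundary diffeomorphism rather than merely an a.e.\ transport map. Injectivity from strict convexity is clean, but surjectivity onto $\tilde{\Omega}$ together with the boundary behaviour is exactly where the uniform convexity hypotheses on $\Omega$ and $\tilde{\Omega}$ and the regularity conclusions of Theorem \ref{t1.1} are really used.
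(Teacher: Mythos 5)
Your proposal is correct and follows essentially the same route as the paper, which derives the corollary by combining Theorem \ref{t1.1} (for $\tau\in(0,\frac{\pi}{4})\cup(\frac{\pi}{4},\frac{\pi}{2})$) with the cited endpoint results for $\tau=0,\frac{\pi}{4},\frac{\pi}{2}$ and then reading off Definition \ref{d1.1} with $f=Du^{\infty}$. Your additional remarks---that an admissible uniformly convex initial datum $u_{0}$ with $Du_{0}(\Omega)=\tilde{\Omega}$ must first be produced before Theorem \ref{t1.1} can be applied, and that the gradient map of a uniformly convex potential is a genuine diffeomorphism onto $\tilde{\Omega}$---are details the paper leaves implicit, and they are handled correctly.
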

The rest of this article is organized as follows.
 In Section 2, we present a preliminary result. The result will be used  to give the proof of the main theorem in the article.
In Section 3, we prove that the structure conditions in (\ref{e1.2}) satisfy  the hypotheses in Proposition \ref{p1.1}.
Therefore we are able to characterize the longtime behavior of
parabolic type special Lagrangian equation (\ref{e1.2}) and give
the proof of Theorem \ref{t1.1}.

\section{Preliminaries}
Consider
\begin{equation}\label{e2.1}
\left\{ \begin{aligned}\frac{\partial u}{\partial t}&=F(D^{2}u),
& t>0,\quad x\in \Omega, \\
Du(\Omega)&=\tilde{\Omega}, &t>0,\qquad\qquad\\
 u&=u_{0}, & t=0,\quad x\in \Omega,
\end{aligned} \right.
\end{equation}
where $F$ is a $C^{2+\alpha_{0}}$ function for some $0<\alpha_{0}<1$ defined on the cone 
$\Gamma_{+}$ of positive definite symmetric matrices, which is monotonically increasing and
\begin{equation}\label{e1.4aa}
\left\{ \begin{aligned}&F[A]:=F(\lambda_{1},\lambda_{2},\cdots, \lambda_{n})\\
&F(\cdots,\lambda_{i},\cdots,\lambda_{j},\cdots )=F(\cdots,\lambda_{j},\cdots,\lambda_{i},\cdots),\quad \text{for} \,\,\, 1\leq i<j\leq n,
\end{aligned} \right.
\end{equation}
with
$$\lambda_{1}\leq\lambda_{2}\leq\cdots\leq\lambda_{n}$$
being the eigenvalues of the $n\times n$ symmetric matrix $A$.

For any $\mu_{1}>0, \mu_{2}>0$, we define
$$\Gamma^{+}_{]\mu_{1},\mu_{2}[}=\{(\lambda_{1},\lambda_{2},\cdots, \lambda_{n})|0\leq\lambda_{1}\leq\lambda_{2}\leq\cdots\leq\lambda_{n}, \lambda_{1}\leq \mu_{1}, \lambda_{n}\geq \mu_{2}\}.$$
We assume that there exist  positive constants $\lambda, \Lambda$ depending only on $\mu_{1}, \mu_{2}$ such that
for any $(\lambda_{1},\lambda_{2},\cdots, \lambda_{n})\in \Gamma^{+}_{]\mu_{1},\mu_{2}[}$:
\begin{equation}\label{e1.16}
 \Lambda\geq\sum^{n}_{i=1}\frac{\partial F}{\partial \lambda_{i}}\geq \lambda,
\end{equation}
\begin{equation}\label{e1.17}
  \Lambda\geq\sum^{n}_{i=1}\frac{\partial F}{\partial \lambda_{i}}\lambda^{2}_{i}\geq \lambda.
\end{equation}
In addition,
\begin{equation}\label{e1.15}
 F(A)\,\, and\,\,F^{*}(A)\triangleq-F(A^{-1})\,\,are\,\, concave\,\, on\,\,\Gamma_{+}.
\end{equation}
Moreover, we assume that there exist two functions $f_{1}$, $f_{2}$ which are monotonically increasing in $(0,+\infty)$ satisfying
\begin{equation}\label{e1.15a}
 f_{1}(\lambda_{1})\leq F(\lambda_{1},\lambda_{2},\cdots, \lambda_{n})\leq f_{2}(\lambda_{n})\quad (\forall\,\,\, 0\leq\lambda_{1}\leq\lambda_{2}\leq\cdots\leq\lambda_{n}),
\end{equation}
and for any $\Phi, \Psi \in \pounds$,
\begin{equation}\label{e1.15b}
\left\{ \begin{aligned}
 f_{1}(t)\leq \Phi\Rightarrow \exists t_{1}>0,\,\, t\leq  t_{1} ,\\
 f_{2}(t)\geq \Psi\Rightarrow \exists t_{2}>0,\,\, t\geq t_{2} ,
 \end{aligned} \right.
\end{equation}
where
$$\pounds=\{\Upsilon| \exists(\lambda_{1},\lambda_{2},\cdots, \lambda_{n}),  0<\lambda_{1}\leq\lambda_{2}\leq\cdots\leq\lambda_{n}, \Upsilon=F(\lambda_{1},\lambda_{2},\cdots, \lambda_{n})\}.$$

The following proposition concerning convergence of general uniformly parabolic operators under
certain structural conditions
 plays a fundamental role in our proof of Theorem
\ref{t1.1}.
\begin{Proposition}(R.L. Huang and Y.H. Ye, see Theorem 1.1 in  \cite{HRY}.)\label{p1.1}
Assume that $\Omega$, $\tilde{\Omega}$ are bounded, uniformly convex domains with smooth boundary in $\mathbb{R}^{n}$, $0<\alpha_{0}<1$ and
the map $F$ satisfies (\ref{e1.4aa}),  (\ref{e1.16}), (\ref{e1.17}), (\ref{e1.15}), (\ref{e1.15a}), (\ref{e1.15b}).
  Then for any given initial function $u_{0}\in C^{2+\alpha_{0}}(\bar{\Omega})$
  which is   uniformly convex and satisfies $Du_{0}(\Omega)=\tilde{\Omega}$,  the  strictly convex solution of (\ref{e2.1}) exists
  for all $t\geq 0$ and $u(\cdot,t)$ converges to a function $u^{\infty}(x,t)=u^\infty(x)+C_{\infty}\cdot t$ in $C^{1+\zeta}(\bar{\Omega})\cap C^{4+\alpha}(\bar{D})$ as $t\rightarrow\infty$
  for any $D\subset\subset\Omega$, $\zeta<1$,$0<\alpha<\alpha_{0}$, that is,

  $$\lim_{t\rightarrow+\infty}\|u(\cdot,t)-u^{\infty}(\cdot,t)\|_{C^{1+\zeta}(\bar{\Omega})}=0,\qquad
  \lim_{t\rightarrow+\infty}\|u(\cdot,t)-u^{\infty}(\cdot,t)\|_{C^{4+\alpha}(\bar{D})}=0.$$
And $u^{\infty}(x)\in C^{1+1}(\bar{\Omega})\cap C^{4+\alpha}(\Omega)$ is a solution of
\begin{equation}\label{e1.18}
\left\{ \begin{aligned}F(D^{2}u)&=C_{\infty},
&  x\in \Omega, \\
Du(\Omega)&=\tilde{\Omega}.
\end{aligned} \right.
\end{equation}
The constant $C_{\infty}$ depends only on $\Omega$, $\tilde{\Omega}$ and $F$. The solution to (\ref{e1.18}) is unique up to additions of constants.
\end{Proposition}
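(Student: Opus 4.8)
The plan is to run a standard parabolic continuation argument: establish short-time existence, then prove uniform-in-time a priori estimates up to $C^{2+\alpha}$, which upgrade to long-time existence, and finally extract convergence as $t\to\infty$ from the decay of the oscillation of $u_t$. The first step is to recast the second boundary condition $Du(\Omega)=\tilde{\Omega}$ as a nonlinear oblique boundary condition. Writing $\tilde{\Omega}=\{y:h(y)<0\}$ for a smooth uniformly convex defining function $h$, the constraint becomes $h(Du(x))=0$ for $x\in\partial\Omega$; differentiating along $\partial\Omega$ shows that $Dh(Du)$ is transversal to $\partial\Omega$, so this is a genuine oblique problem to which the maximum principle for oblique derivative problems applies. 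Short-time existence then follows from the standard linear theory for the linearized oblique parabolic problem together with the implicit function theorem, using that $u_{0}$ is uniformly convex and lies in $C^{2+\alpha_{0}}(\bar{\Omega})$.

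The heart of the argument is a priori estimates uniform in $t$. The $C^{0}$ bound (up to additive constants) follows by comparing $u$ with the linear-growth barrier $u^{\infty}(x)+C_{\infty}t$ and using the maximum principle; equivalently one controls $\mathrm{osc}\,u$ and $\sup|u_{t}|$. The gradient estimate is essentially free: since $Du(\Omega)=\tilde{\Omega}$ and $\tilde{\Omega}$ is bounded, $|Du|\le\sup_{y\in\tilde{\Omega}}|y|$ for all $t$, so $\|u\|_{C^{1}}$ is controlled. For the parabolicity structure, condition \eqref{e1.16} bounds the trace $\sum_{i}\partial F/\partial\lambda_{i}$ from above and below, while \eqref{e1.17} controls the weighted trace $\sum_{i}(\partial F/\partial\lambda_{i})\lambda_{i}^{2}$; together they yield uniform parabolicity once the eigenvalues are confined to a compact subinterval of $(0,\infty)$. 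That two-sided pinch is supplied by \eqref{e1.15a} and \eqref{e1.15b}: since $u_{t}=F\in\pounds$ stays bounded, the bound $f_{2}(\lambda_{n})\ge F\ge\Psi$ controls the largest eigenvalue from below, and combined with the upper Hessian estimate and the implications in \eqref{e1.15b} one confines all $\lambda_{i}$ to a fixed interval $[t_{2},t_{1}]\subset(0,\infty)$.

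The main obstacle is the global $C^{2}$ estimate. Interior and boundary bounds on $\lambda_{\max}(D^{2}u)$ (hence on $|D^{2}u|$) are obtained by differentiating the equation twice and applying the maximum principle to an auxiliary quantity such as $\log\lambda_{\max}(D^{2}u)$ plus lower-order terms, the delicate part being the boundary second-derivative estimate for the oblique condition $h(Du)=0$, which follows the Lions-Trudinger-Urbas technique adapted to the parabolic flow. The complementary lower bound $\lambda_{1}\ge c>0$, i.e. preservation of strict convexity, is obtained by Legendre-transform duality: the Legendre transform $u^{*}$ of $u$ (with $Du^{*}=(Du)^{-1}$) evolves under the dual flow governed by $F^{*}(A)=-F(A^{-1})$ with the roles of $\Omega$ and $\tilde{\Omega}$ interchanged, and an upper Hessian bound for $u^{*}$ translates into the lower eigenvalue bound for $u$. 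Condition \eqref{e1.15}, positing concavity of both $F$ and $F^{*}$, is exactly what makes both the equation and its dual amenable to the concave theory. With $D^{2}u$ confined to a compact subset of $\Gamma_{+}$, the equation is uniformly parabolic and concave, so the Evans-Krylov theorem yields interior $C^{2+\alpha}$ estimates which, with the boundary regularity for the oblique problem, give global $C^{2+\alpha}$ bounds; Schauder theory then bootstraps to all higher derivatives on compact subsets. These bounds being uniform in $t$, the solution exists for all $t\ge 0$.

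For convergence, differentiate \eqref{e2.1} in $t$ and set $w=u_{t}$. Then $w$ solves a linear, uniformly parabolic equation $w_{t}=\sum_{i,j}F^{ij}(D^{2}u)\,w_{ij}$ with a homogeneous oblique boundary condition obtained by differentiating $h(Du)=0$. The maximum principle forces $\max_{\bar{\Omega}}w(\cdot,t)$ to be nonincreasing and $\min_{\bar{\Omega}}w(\cdot,t)$ to be nondecreasing, and a strong-maximum-principle argument shows $\mathrm{osc}_{\bar{\Omega}}\,w(\cdot,t)\to 0$; hence $u_{t}\to C_{\infty}$ uniformly for a single constant $C_{\infty}$ depending only on $\Omega$, $\tilde{\Omega}$, $F$. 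Consequently $u(\cdot,t)-C_{\infty}t$ is precompact in the stated norms and converges to a limit $u^{\infty}$ which, upon passing to the limit in the flow and the boundary condition, solves \eqref{e1.18}, with uniqueness up to additive constants following from the oblique maximum principle applied to the difference of two solutions. I expect the boundary $C^{2}$ estimate under the second boundary (oblique) condition, uniform in time, to be the decisive technical difficulty.
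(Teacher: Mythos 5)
You should first be aware that the paper contains no proof of this proposition: it is imported verbatim from \cite{HRY} (Theorem 1.1 there), and the only account of its proof given here is the two--sentence summary immediately following the statement --- short--time existence via the inverse function theorem, then auxiliary functions/barriers built from the structural conditions \eqref{e1.4aa}--\eqref{e1.15b}, then the a priori estimates needed for convergence. Measured against that summary (and against the standard second--boundary--value parabolic literature of Schn\"urer--Smoczyk, Huang, and Huang--Ye that \cite{HRY} follows), your outline is the right reconstruction: the oblique reformulation $h(Du)=0$ on $\partial\Omega$, the free $C^1$ bound from $Du(\Omega)=\tilde{\Omega}$, the Legendre--transform duality exploiting the concavity of both $F$ and $F^{*}$ in \eqref{e1.15} to get the two--sided Hessian bound, Evans--Krylov plus Schauder for higher regularity, and oscillation decay of $u_t$ for convergence. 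This is essentially the same route, so there is nothing to compare at the level of strategy.

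One concrete point to correct: you have the logic of \eqref{e1.15a}--\eqref{e1.15b} backwards. Along the flow the maximum principle traps $u_t=F(D^2u)$ between its initial extremes $\Psi\le F\le\Phi$; then $f_1(\lambda_1)\le F\le\Phi$ gives $\lambda_1\le t_1$ and $f_2(\lambda_n)\ge F\ge\Psi$ gives $\lambda_n\ge t_2$. That is \emph{not} confinement of the spectrum to a compact interval $[t_2,t_1]$ --- it is the opposite inequality pair, and its purpose is to place $(\lambda_1,\dots,\lambda_n)$ in the region $\Gamma^{+}_{]t_1,t_2[}$ on which the uniform parabolicity hypotheses \eqref{e1.16}--\eqref{e1.17} are assumed to hold, so that those bounds become available. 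The genuine compact confinement of the eigenvalues comes afterwards, from the upper Hessian estimate and the dual lower bound, not from \eqref{e1.15b} itself. As a reconstruction of a theorem the paper only cites, this is a minor slip rather than a fatal gap, but the roles of the hypotheses should be stated the right way around.
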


 In \cite{HRY}, Huang and Ye first used the inverse function theorem to establish the short time existence of the flow \eqref{e2.1}. 
 Then the authors used structural conditions \eqref{e1.4aa} - \eqref{e1.15b} to construct suitable auxiliary functions as barriers 
 and finally established the apriori estimates needed to prove the  convergence of the flow.

\section{Proof of the main thoerem}
In this article, we verify that the hypotheses in (\ref{e1.4aa})-(\ref{e1.15b}) are valid
for the geometric evolution equation (\ref{e1.2}) via elementary methods. To that end,
we require an elementary result for monotone increasing function.
\begin{lemma}\label{l3.1}
Let $f(t)$ is  monotone increasing continuous function on $(0,+\infty)$.
Then for any $0<\lambda_{1}\leq\lambda_{2}\leq\cdots\leq\lambda_{n}$,
there exists a unique $\lambda \in [\lambda_{1},\lambda_{n}]$, such that
$$f(\lambda)=\frac{\sum^{n}_{i=1}f(\lambda_{i})}{n}.$$
\end{lemma}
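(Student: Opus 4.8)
The plan is to recognize the claim as a one-variable Intermediate Value Theorem statement and to read off uniqueness from the monotonicity of $f$. I would begin by naming the target value, the arithmetic mean
$$M = \frac{1}{n}\sum_{i=1}^n f(\lambda_i),$$
and then trap it between the extreme values $f(\lambda_1)$ and $f(\lambda_n)$. Since $\lambda_1 \leq \lambda_i \leq \lambda_n$ for every $i$ and $f$ is increasing, each term obeys $f(\lambda_1) \leq f(\lambda_i) \leq f(\lambda_n)$; averaging over $i$ preserves the inequalities and gives $f(\lambda_1) \leq M \leq f(\lambda_n)$.

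For existence I would apply the Intermediate Value Theorem to the continuous function $f$ on the compact interval $[\lambda_1,\lambda_n]$: because $M$ lies between the endpoint values $f(\lambda_1)$ and $f(\lambda_n)$, there is at least one $\lambda \in [\lambda_1,\lambda_n]$ with $f(\lambda)=M$. Uniqueness then follows from strict monotonicity --- if two distinct points $\lambda<\lambda'$ both solved $f(\lambda)=f(\lambda')=M$, strict increase would force $f(\lambda)<f(\lambda')$, a contradiction --- so the solution is single-valued.

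The one point requiring care, and the only possible obstacle, is the precise sense of ``monotone increasing.'' Existence goes through verbatim even for a merely non-decreasing $f$, but uniqueness can fail if $M$ happens to fall on a flat interval of $f$; the uniqueness assertion therefore commits us to the strictly increasing reading, which is exactly the sense in which the auxiliary functions $f_1,f_2$ of \eqref{e1.15a}--\eqref{e1.15b} will be used. I would flag this convention at the outset so that the selection of $\lambda$ is unambiguous.
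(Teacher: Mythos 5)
Your proof is correct and follows essentially the same route as the paper: bound the mean $M$ between $f(\lambda_1)$ and $f(\lambda_n)$ using monotonicity, then invoke the Intermediate Value Theorem. Your explicit uniqueness argument and the remark that uniqueness requires the \emph{strictly} increasing reading are in fact more careful than the paper's proof, which simply states that the conclusion follows from the IVT.
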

\begin{proof}
Since  $f(t)$ is  monotone increasing, then
$$f(\lambda_{1})\leq\frac{\sum^{n}_{i=1}f(\lambda_{i})}{n}\leq f(\lambda_{n}).$$
By making use of the intermediate value theorem of continuous functions, we obtain the conclusion.
\end{proof}

We put
$$F_{\tau}(\lambda_{1},\lambda_{2},\cdots, \lambda_{n})=\left\{ \begin{aligned}
& \sum_{i}\ln(\frac{\lambda_{i}+a-b}{\lambda_{i}+a+b}) \qquad\qquad\quad\quad\qquad\quad 0<\tau<\frac{\pi}{4},\\
& \sum_{i}\arctan(\frac{\lambda_{i}+a-b}{\lambda_{i}+a+b}), \qquad \qquad\qquad \quad\frac{\pi}{4}<\tau<\frac{\pi}{2}.
\end{aligned} \right.$$
Without loss of generality, we always assume that $0<\lambda_{1}\leq\lambda_{2}\leq\cdots\leq\lambda_{n}$.
\begin{lemma}\label{l3.2}
For any $\tau\in(0,\frac{\pi}{4})\cup(\frac{\pi}{4},\frac{\pi}{2})$, the operator $F_{\tau}$ satisfies  the hypotheses in (\ref{e1.4aa})-(\ref{e1.15b}).
\end{lemma}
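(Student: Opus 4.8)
The plan is to exploit the \emph{separable} structure of $F_\tau$. Writing $\varphi=\varphi_\tau$ for the one–variable generator
$$\varphi(\lambda)=\ln\frac{\lambda+a-b}{\lambda+a+b}\ \ (0<\tau<\tfrac\pi4),\qquad \varphi(\lambda)=\arctan\frac{\lambda+a-b}{\lambda+a+b}\ \ (\tfrac\pi4<\tau<\tfrac\pi2),$$
we have $F_\tau(\lambda_1,\dots,\lambda_n)=\sum_i\varphi(\lambda_i)$, so every hypothesis \eqref{e1.4aa}--\eqref{e1.15b} reduces to a scalar statement about $\varphi$; in particular the symmetry \eqref{e1.4aa} is immediate. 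The first computation I would carry out is $\varphi'$. Using $a=\cot\tau$ and $b=\sqrt{|\cot^2\tau-1|}$, so that $b^2=|a^2-1|$, the two cases collapse to one clean formula, since $(\lambda+a+b)(\lambda+a-b)=(\lambda+a)^2-b^2$ and $(\lambda+a)^2+b^2$ both equal $\lambda^2+2a\lambda+1$:
$$\varphi'(\lambda)=\frac{cb}{\lambda^2+2a\lambda+1},\qquad c=2\ \text{or}\ 1.$$
This exhibits $\varphi'>0$ (monotonicity of $F_\tau$), shows $\varphi'$ is \emph{decreasing} on $(0,\infty)$, and shows $\psi(\lambda):=\lambda^2\varphi'(\lambda)=\frac{cb\lambda^2}{\lambda^2+2a\lambda+1}$ is \emph{increasing}, with both $\varphi'\le cb$ and $\psi\le cb$.

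For the ellipticity bounds \eqref{e1.16}--\eqref{e1.17}, note $\partial F_\tau/\partial\lambda_i=\varphi'(\lambda_i)$. The upper bounds follow at once from $\varphi'\le cb$ and $\psi\le cb$, giving both sums $\le ncb=:\Lambda$ uniformly in $\mu_1,\mu_2$. The lower bounds are precisely where the set $\Gamma^+_{]\mu_1,\mu_2[}$ enters: since $\varphi'$ is decreasing and $\lambda_1\le\mu_1$, one gets $\sum_i\varphi'(\lambda_i)\ge\varphi'(\lambda_1)\ge\varphi'(\mu_1)>0$ for \eqref{e1.16}; since $\psi$ is increasing and $\lambda_n\ge\mu_2$, one gets $\sum_i\psi(\lambda_i)\ge\psi(\lambda_n)\ge\psi(\mu_2)>0$ for \eqref{e1.17}. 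Taking $\lambda:=\min\{\varphi'(\mu_1),\psi(\mu_2)\}$ gives a common positive lower constant depending only on $\mu_1,\mu_2$ (through $a,b$).

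For the concavity requirement \eqref{e1.15} I would use that a separable symmetric spectral function inherits concavity from its scalar generator: $F_\tau(A)=\operatorname{tr}\varphi(A)$ is concave on $\Gamma_+$ iff $\varphi$ is concave, which holds since $\varphi''(\lambda)=\frac{-2cb(\lambda+a)}{(\lambda^2+2a\lambda+1)^2}<0$ for $\lambda,a>0$. (Elementarily, this is the second–variation formula for symmetric functions of eigenvalues, whose Hessian part is $\operatorname{diag}(\varphi'')$ and whose divided–difference part is controlled by the monotonicity of $\varphi'$ — both already established.) For the dual operator, $F^*_\tau(A)=-F_\tau(A^{-1})=\sum_i\chi(\lambda_i)$ with $\chi(\lambda)=-\varphi(1/\lambda)$ is again separable, so I only need $\chi''\le0$; a direct computation with $s=1/\lambda>0$ reduces this to $2\varphi'(s)+s\varphi''(s)=\frac{2cb(as+1)}{(s^2+2as+1)^2}>0$, valid because $a>0$. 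I expect this dual concavity to be the main obstacle, since it is the one point that is not obvious before computing.

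Finally, for \eqref{e1.15a}--\eqref{e1.15b} I would invoke Lemma~\ref{l3.1} applied to $\varphi$: for any $0<\lambda_1\le\cdots\le\lambda_n$ there is $\lambda^*\in[\lambda_1,\lambda_n]$ with $F_\tau=n\varphi(\lambda^*)$, whence monotonicity of $\varphi$ yields $n\varphi(\lambda_1)\le F_\tau\le n\varphi(\lambda_n)$, so that $f_1=f_2=n\varphi$ serve in \eqref{e1.15a}. Because $\varphi$ is bounded and strictly increasing on $(0,\infty)$, its range is an open interval whose endpoints are exactly $\inf$ and $\sup$ of $f_1=f_2=n\varphi$, and $\pounds$ is the corresponding open interval of attained values. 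Hence for any $\Phi,\Psi\in\pounds$ one has $\Phi<\sup f_1$ and $\Psi>\inf f_2$, so by the intermediate value theorem and strict monotonicity the sublevel set $\{t:f_1(t)\le\Phi\}$ is bounded above by some $t_1$ and the superlevel set $\{t:f_2(t)\ge\Psi\}$ is bounded below by some $t_2>0$, which is precisely \eqref{e1.15b}. This completes the verification in both regimes of $\tau$.
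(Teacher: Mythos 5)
Your proposal is correct: every hypothesis is verified, and the computations ($\varphi'>0$, the bounds for \eqref{e1.16}--\eqref{e1.17} via $\varphi'$ decreasing and $\lambda^2\varphi'$ increasing, $\varphi''<0$, the dual inequality $2\varphi'(s)+s\varphi''(s)=\frac{2cb(as+1)}{(s^2+2as+1)^2}>0$, and the choice $f_1=f_2=n\varphi$ combined with Lemma~\ref{l3.1} and monotonicity for \eqref{e1.15a}--\eqref{e1.15b}) all check out. The overall strategy is the same as the paper's -- a direct, hypothesis-by-hypothesis verification with exactly the same barriers $f_1=f_2=n\varphi$ -- but you organize it differently: the paper runs the two regimes $\tau\in(0,\frac{\pi}{4})$ and $\tau\in(\frac{\pi}{4},\frac{\pi}{2})$ as fully separate cases, computing $\partial F_\tau/\partial\lambda_i$, the bounds, and the two concavity Hessians twice with different algebra, whereas you exploit the identities $a^2-b^2=1$ (resp. $a^2+b^2=1$) to collapse both generators' derivatives into the single formula $\varphi'(\lambda)=cb/(\lambda^2+2a\lambda+1)$, which halves the computation, makes the monotonicity of $\varphi'$ and $\lambda^2\varphi'$ (hence \eqref{e1.16}--\eqref{e1.17}) transparent, and reduces the concavity of $F_\tau^*$ to one scalar inequality instead of a case-dependent Hessian computation. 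The paper's version has the virtue of displaying the explicit constants $\lambda,\Lambda$ in each case and of checking the Hessians in the exact form in which they appear in condition \eqref{e1.15}; your version is shorter and exposes the common structure behind the two regimes. One minor point of care: when passing from concavity of the scalar generator to concavity of $F_\tau(A)$ on the matrix cone $\Gamma_+$, you correctly note that the divided-difference (off-diagonal) part of the second variation is controlled by the monotonicity of $\varphi'$ -- the paper only displays the diagonal Hessian in the eigenvalue variables, so your remark is if anything more complete.
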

\begin{proof}
Case 1, $\tau\in(0,\frac{\pi}{4})$.

It's obvious that $a=\cot\tau>b=\sqrt{|\cot^{2}\tau-1|}$. We observe
$$\frac{\partial F_{\tau}}{\partial\lambda_{i}}=\frac{1}{\lambda_{i}+a-b}-\frac{1}{\lambda_{i}+a+b}=\frac{2b}{(\lambda_i+a-b)(\lambda_i+a+b)}>0.$$
Then the equation (\ref{e2.1}) is parabolic and  $F_{\tau}$ satisfies (\ref{e1.4aa}).
For any $\mu_{1}>0, \mu_{2}>0$,  if
$\lambda_{1}\leq \mu_{1}, \lambda_{n}\geq \mu_{2},$ then we obtain
\begin{equation}\label{e3.1}
\begin{aligned}
\frac{2nb}{(a-b)(a+b)}&\geq\sum^{n}_{i=1}\frac{\partial F_{\tau}}{\partial\lambda_{i}}=\sum^{n}_{i=1}\frac{2b}{(\lambda_{i}+a-b)(\lambda_{i}+a+b)}\\
&\geq \frac{2b}{(\lambda_{1}+a-b)(\lambda_{1}+a+b)}\\
&\geq  \frac{2b}{(\mu_{1}+a-b)(\mu_{1}+a+b)}
\end{aligned}
\end{equation}
and
\begin{equation}\label{e3.2}
\begin{aligned}
2nb&\geq\sum^{n}_{i=1}\frac{\partial F_{\tau}}{\partial\lambda_{i}}\lambda^{2}_{i}=\sum^{n}_{i=1}\frac{2b\lambda^{2}_{i}}{(\lambda_{i}+a-b)(\lambda_{i}+a+b)}\\
&\geq \frac{2b\lambda^{2}_{n}}{(\lambda_{n}+a-b)(\lambda_{n}+a+b)}\\
&\geq  \frac{2b\mu^{2}_{2}}{(\mu_{2}+a-b)(\mu_{2}+a+b)}
\end{aligned}
\end{equation}
By (\ref{e3.1}) and (\ref{e3.2}) we deduce that $F_{\tau}$ satisfies (\ref{e1.16}) and(\ref{e1.17}). We calculate directly to obtain:
$$\sum^{n}_{i,j=1}\frac{\partial^{2} F_{\tau}}{\partial\lambda_{i}\lambda_{j}}\xi_{i}\xi_{j}=
\sum^{n}_{i=1}(\frac{\xi^{2}_{i}}{(\lambda_{i}+a+b)^{2}}-\frac{\xi^{2}_{i}}{(\lambda_{i}+a-b)^{2}})\leq 0$$
and
$$\sum^{n}_{i,j=1}\frac{\partial^{2} F_{\tau}^{*}}{\partial\lambda_{i}\lambda_{j}}\xi_{i}\xi_{j}=\sum^{n}_{i=1}(\frac{\xi^{2}_{i}}{(\lambda_{i}+(a-b)^{-1})^{2}}-\frac{\xi^{2}_{i}}{(\lambda_{i}+(a+b)^{-1})^{2}})\leq 0.$$
Consequently, $F_{\tau}$ satisfies (\ref{e1.15}). Let $$f_{1}(t)=f_{2}(t)\triangleq n\ln(\frac{t+a-b}{t+a+b}).$$
It is elementary to check that
$$f_{1}(\lambda_{1})\leq F_{\tau}(\lambda_{1},\lambda_{2},\cdots, \lambda_{n})\leq f_{2}(\lambda_{n}).$$
Given $0<\lambda_{1}\leq\lambda_{2}\leq\cdots\leq\lambda_{n}$, if
$$f_{1}(t)\leq F_{\tau}(\lambda_{1},\lambda_{2},\cdots, \lambda_{n}),$$
then we have
\begin{equation}\label{e3.3}
\ln(\frac{t+a-b}{t+a+b})\leq \frac{\sum^{n}_{i=1}\ln(\frac{\lambda_{i}+a-b}{\lambda_{i}+a+b})}{n}.
\end{equation}
It's easy to see that
$$\ln(\frac{t+a-b}{t+a+b})$$
is monotone increasing continuous function on $(0,+\infty)$.
By Lemma \ref{l3.1}, there exist a unique $t_{1} \in [\lambda_{1},\lambda_{n}]$, such that
$$\frac{\sum^{n}_{i=1}\ln(\frac{\lambda_{i}+a-b}{\lambda_{i}+a+b})}{n}=\ln(\frac{t_{1}+a-b}{t_{1}+a+b}).$$
Combining with (\ref{e3.3}), we obtain
\begin{equation*}
\ln(\frac{t+a-b}{t+a+b})\leq \ln(\frac{t_{1}+a-b}{t_{1}+a+b}).
\end{equation*}
This implies $t\leq t_{1}$. Using the same methods, if
$$f_{2}(t)\geq F_{\tau}(\lambda_{1},\lambda_{2},\cdots, \lambda_{n}),$$
then there exist a unique $t_{2} \in [\lambda_{1},\lambda_{n}]$ such that $t\geq t_{2}$. Putting these facts together,
we see $F_{\tau}$ satisfies (\ref{e1.4aa})-(\ref{e1.15b}) for  $\tau\in(0,\frac{\pi}{4})$.\\

Case 2, $\tau\in(\frac{\pi}{4},\frac{\pi}{2})$.

 A direct calculation as in case 1 gives
$$\frac{\partial F_{\tau}}{\partial\lambda_{i}}=\frac{2b}{(\lambda_{i}+a-b)^{2}+(\lambda_{i}+a+b)^{2}}>0.$$
Then the equation (\ref{e2.1}) is parabolic and  $F_{\tau}$ also satisfies (\ref{e1.4aa}).
For any $\mu_{1}>0, \mu_{2}>0$,  if
$\lambda_{1}\leq \mu_{1}, \lambda_{n}\geq \mu_{2},$ then we deduce that
\begin{equation}\label{e3.4}
\begin{aligned}
\frac{2nb}{(a-b)^{2}+(a+b)^{2}}&\geq\sum^{n}_{i=1}\frac{\partial F_{\tau}}{\partial\lambda_{i}}=\sum^{n}_{i=1}\frac{2b}{(\lambda_{i}+a-b)^{2}+(\lambda_{i}+a+b)^{2}}\\
&\geq \frac{2b}{(\lambda_{1}+a-b)^{2}+(\lambda_{1}+a+b)^{2}}\\
&\geq  \frac{2b}{(\mu_{1}+a-b)^{2}+(\mu_{1}+a+b)^{2}}
\end{aligned}
\end{equation}
and
\begin{equation}\label{e3.5}
\begin{aligned}
nb&\geq\sum^{n}_{i=1}\frac{\partial F_{\tau}}{\partial\lambda_{i}}\lambda^{2}_{i}=\sum^{n}_{i=1}\frac{2b\lambda^{2}_{i}}{(\lambda_{i}+a-b)^{2}+(\lambda_{i}+a+b)^{2}}\\
&\geq \frac{2b\lambda^{2}_{n}}{(\lambda_{n}+a-b)^{2}+(\lambda_{n}+a+b)^{2}}\\
&\geq  \frac{2b\mu^{2}_{2}}{(\mu_{2}+a-b)^{2}+(\mu_{2}+a+b)^{2}}.
\end{aligned}
\end{equation}
By (\ref{e3.4}) and (\ref{e3.5}),  we see that $F_{\tau}$ satisfies (\ref{e1.16}) and(\ref{e1.17}). Clearly, we calculate directly to have:
$$\sum^{n}_{i,j=1}\frac{\partial^{2} F_{\tau}}{\partial\lambda_{i}\lambda_{j}}\xi_{i}\xi_{j}=
\sum^{n}_{i=1}-\frac{8(\lambda_{i}+a)b\xi^{2}_{i}}{((\lambda_{i}+a-b)^{2}+(\lambda_{i}+a+b)^{2})^{2}}\leq 0$$
and
$$\sum^{n}_{i,j=1}\frac{\partial^{2} F_{\tau}^{*}}{\partial\lambda_{i}\lambda_{j}}\xi_{i}\xi_{j}=\sum^{n}_{i=1}-\frac{8b(a+\lambda_{i}(a^{2}+b^{2}))
\xi^{2}_{i}}{((1+\lambda_{i}(a-b))^{2}+(1+\lambda_{i}(a+b))^{2})^{2}}\leq 0.$$
Therefore, $F_{\tau}$ satisfies (\ref{e1.15}). We define $$f_{1}(t)=f_{2}(t)\triangleq n\arctan(\frac{t+a-b}{t+a+b}).$$
Note that the above functions are monotone increasing and continuous in $t$.   We  have the pointwise inequalities
$$f_{1}(\lambda_{1})\leq F_{\tau}(\lambda_{1},\lambda_{2},\cdots, \lambda_{n})\leq f_{2}(\lambda_{n}).$$
Given $0<\lambda_{1}\leq\lambda_{2}\leq\cdots\leq\lambda_{n}$, if
$$f_{1}(t)\leq F_{\tau}(\lambda_{1},\lambda_{2},\cdots, \lambda_{n}),$$
then we obtain
\begin{equation}\label{e3.6}
\arctan(\frac{t+a-b}{t+a+b})\leq \frac{\sum^{n}_{i=1}\arctan(\frac{\lambda_{i}+a-b}{\lambda_{i}+a+b})}{n}.
\end{equation}
By Lemma \ref{l3.1}, there exist a unique $t_{1} \in [\lambda_{1},\lambda_{n}]$, such that
$$\frac{\sum^{n}_{i=1}\arctan(\frac{\lambda_{i}+a-b}{\lambda_{i}+a+b})}{n}=\arctan(\frac{t_{1}+a-b}{t_{1}+a+b}).$$
Combining with (\ref{e3.6}), we conclude that
\begin{equation*}
\arctan(\frac{t+a-b}{t+a+b})\leq \arctan(\frac{t_{1}+a-b}{t_{1}+a+b}).
\end{equation*}
This implies $t\leq t_{1}$. Using the same methods, if
$$f_{2}(t)\geq F_{\tau}(\lambda_{1},\lambda_{2},\cdots, \lambda_{n}),$$
then there exists a unique $t_{2} \in [\lambda_{1},\lambda_{n}]$ such that $t\geq t_{2}$. To summarize, we have also shown that
$F_{\tau}$ satisfies (\ref{e1.4aa})-(\ref{e1.15b}) for  $\tau\in(\frac{\pi}{4},\frac{\pi}{2})$.
Finally, combining case 1 with case 2, we obtain the desired results.
\end{proof}

{\bf Proof of Theorem \ref{t1.1}:}
Using Lemma \ref{l3.2}, the operator $F_{\tau}$ satisfies  the hypotheses in (\ref{e1.4aa})-(\ref{e1.15b}). By Proposition \ref{p1.1}, the assertion follows.
\qed

\vspace{5mm}

\end{document}